\newtheorem{case}{Case}
\newtheorem{subcase}{Subcase}
\newtheorem{theorem}{Theorem}[section]
\newtheorem{lemma}{Lemma}[section]
\newtheorem{cor}{Corollary}[section]
\newtheorem{conj}{Conjecture}
\newtheorem{defi}{Definition}[section]
\theoremstyle{definition}
\begin{document}

\title{\bf\Large Extremal graphs for the sum of the first two largest signless Laplacian eigenvalues
\footnote{ Corresponding author Email:\ zhibindu@126.com (Z.-B. Du).\\
		This research is supported by  National Natural Science
		Foundation of China (No.12271362).   }}
\author[1]{Zi-Ming Zhou}
\author[2]{Zhi-Bin Du}
\author[1]{Chang-Xiang He}
\affil[1]{College of Science, University of Shanghai for Science and Technology, Shanghai, P. R. China}
\affil[2]{School of Artificial Intelligence, South China Normal University, Foshan, Guangdong 528225,  P. R. China}
 
\renewcommand\Authands{ and }

\date{}

\maketitle

\begin{abstract}
For a graph $G$, let $S_2(G)$ be the sum of the first two largest signless Laplacian eigenvalues of $G$, and $f(G)=e(G)+3-S_2(G)$. Very recently, Zhou, He and Shan proved that $K^+_{1,n-1}$ (the star graph with an additional edge)  is the unique graph  with minimum value of $f(G)$ among graphs on $n$ vertices. In this paper, we prove that $K^+_{1,e(G)-1}$ is the unique graph  with minimum value of $f(G)$ among graphs with $e(G)$ edges.  
\medskip

\begin{flushleft}
\textbf{Keywords:}    (Signless) Laplacian matrix;  Sum of (Signless) Laplacian   eigenvalues;   Extremal graphs
\end{flushleft}

\end{abstract}

\section{Introduction}

Let $G=(V(G),E(G))$ be a  simple  graph, the number of vertices and edges are denoted by $n(G)$ (or $n$ for short) and $e(G)$, respectively.
Let $A(G)$ be the adjacency matrix of $G$, and $D(G)$ be the diagonal matrix of vertex degrees of $G$.
The Laplacian matrix and the signless Laplacian matrix of $G$ are defined as $L(G)=D(G)-A(G)$ and $Q(G)=D(G)+A(G)$, respectively.
The eigenvalues of $L(G)$ and $Q(G)$ are called  Laplacian eigenvalues and  signless Laplacian eigenvalues of $G$, respectively,
and are denoted by $\mu_1(G)\geq\mu_2(G)\geq\cdots\geq\mu_n(G)$
(or $\mu_1\geq\mu_2\geq\cdots\geq\mu_n$ for short)
and $q_1(G)\geq q_2(G)\geq\cdots\geq q_n(G)$
(or $q_1\geq q_2\geq\cdots\geq q_n$ for short), respectively.

Consider $M(G)$ as a matrix of a graph $G$ of order $n$ and let $k$ be a natural number such that $1\leq k\leq n$.
A general question related to $G$ and $M(G)$ can be raised: ``How large can the sum of the first $k$ largest eigenvalues of $M(G)$ be?''
Usually, solving the cases $k=1, n-1$ and $n$ are simple, but the general case for other $k$ is not easy to be solved. The natural next case to be studied is $k=2$, and some work has been recently done in order to prove this case, i.e., see Ebrahimi \textit{et al.} \cite{Ebrahimi} for the adjacency matrix, Haemers \textit{et al.} \cite{Haemers} for the Laplacian matrix, Zhou \textit{et al.} \cite{ZHS} for the signless Laplacian matrix.

After clarifying the upper bound of the sum of the first two largest eigenvalues, the next question is: ``What does the corresponding extremal graphs look like?''
In \cite{LG}, Li and Guo proposed the full Brouwer's Laplacian spectrum conjecture and proved that the conjecture holds for $k=2$ with a unique extremal graph class.

\begin{theorem}[\cite{LG}]\label{them1}
Let $G$ be a graph on $n$ vertices. Then
$$
\mu_1+\mu_2\leq e(G)+3
$$ 
with equality if and only if $G\cong G(s,n-2-s)$ with $0\leq s\leq n-3$. The graph class $G(s,n-2-s)$ with $0\leq s\leq n-3$ is depicted in Fig. \ref{Extremal graph}.
\end{theorem}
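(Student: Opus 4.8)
The inequality $\mu_1+\mu_2\le e(G)+3$ is exactly the $k=2$ case of Brouwer's conjecture, already established by Haemers et al.\ \cite{Haemers}; accordingly I would take it as given and devote the proof to the equality case, which is the content of the ``full'' conjecture. The two tools I would build on are the edge form of the Laplacian, $L(G)=\sum_{uv\in E(G)}(\mathbf{e}_u-\mathbf{e}_v)(\mathbf{e}_u-\mathbf{e}_v)^{\top}$ (here $\mathbf{e}_u$ is the unit vector at $u$), which gives the variational identity
$$\mu_1+\mu_2=\max\Big\{\sum_{uv\in E(G)}\big((a_u-a_v)^2+(b_u-b_v)^2\big):\ \|a\|=\|b\|=1,\ a\perp b\Big\},$$
and the edge-deletion inequality $\mu_1(G)+\mu_2(G)\le\mu_1(G-uv)+\mu_2(G-uv)+2$, which is Ky Fan's inequality applied to $L(G)=L(G-uv)+(\mathbf{e}_u-\mathbf{e}_v)(\mathbf{e}_u-\mathbf{e}_v)^{\top}$, the rank-one term having eigenvalues $2,0,\dots,0$. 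Since isolated vertices affect neither $e(G)$ nor the two largest eigenvalues, I would strip them away and work with graphs of minimum degree at least one, where the extremal structure turns out to be connected.

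It helps to fix the target explicitly. Writing $d_u$ for the degree of $u$ and $d_1\ge d_2\ge\cdots$ for the degree sequence, the family $G(s,n-2-s)$ is obtained from an edge $uv$ by joining $n-2-s$ new vertices to both $u$ and $v$ (the ``pages'', so that $u,v$ together with these vertices form the book $K_2\vee\overline{K_{n-2-s}}$) and joining $s$ further vertices to $u$ alone (the pendants). Thus $u$ is a dominating vertex, $s=0$ recovers the books $K_2\vee\overline{K_{n-2}}$, and a direct computation on the equitable partition with cells $\{u\},\{v\}$, pages, pendants (whose antisymmetric parts contribute eigenvalues $2$ for the pages and $1$ for the pendants) confirms $\mu_1+\mu_2=e(G)+3$ for every member.

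For the converse I would assume $\mu_1+\mu_2=e(G)+3$ and read off structure from the tightness of the tools above. Tightness of the bound forces, for a suitable edge $uv$, that the increment in the edge-deletion inequality be exactly $2$; the equality case of Ky Fan's inequality then says the rank-one term $(\mathbf{e}_u-\mathbf{e}_v)(\mathbf{e}_u-\mathbf{e}_v)^{\top}$ acts invariantly on the top two-dimensional eigenspace of $L(G-uv)$, which is precisely the condition that constrains how edges may be attached. Combining this with the degree lower bounds $\mu_1\ge d_1+1$ and $\mu_2\ge d_2$ and the attained value pins down the degree sequence and forces a dominating vertex $u$; one then argues that every neighbour of $u$ is either adjacent to one common second vertex $v$ (a page) or has degree one (a pendant), with no further edges, giving $G\cong G(s,n-2-s)$.

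The step I expect to be hardest is excluding the near-extremal competitors: graphs that resemble $G(s,n-2-s)$ but violate equality, the cleanest example being a triangle carrying pendants at two distinct vertices, which I would verify satisfies $\mu_1+\mu_2<e(G)+3$ strictly. Because the extremal value is attained with slack in most degree inequalities, crude estimates do not separate these cases, so the decisive work is a sign-and-support analysis of the eigenvectors belonging to $\mu_1$ and $\mu_2$, showing that displacing a single edge away from the book-with-pendants pattern strictly lowers $\mu_1+\mu_2$. I expect the bulk of the write-up to consist of this case analysis.
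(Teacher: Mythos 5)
The paper offers no proof of Theorem \ref{them1} to compare against: it is imported verbatim from Li and Guo \cite{LG}, and the equality characterization is the entire content of that (paper-length) reference. Judged on its own, your proposal handles the easy parts correctly --- deferring the inequality $\mu_1+\mu_2\le e(G)+3$ to \cite{Haemers} is legitimate, and your verification that every $G(s,n-2-s)$ attains equality is sound (e.g. $G(n-3,1)\cong K^+_{1,n-1}$ has Laplacian spectrum $n,\,3,\,1^{n-3},\,0$, summing with multiplicity to $2e(G)$ and giving $\mu_1+\mu_2=n+3=e(G)+3$). The gap is in the converse, and it is concrete: your central mechanism is the claim that extremality forces, for some edge $uv$, equality in the Ky Fan step $\mu_1(G)+\mu_2(G)\le\mu_1(G-uv)+\mu_2(G-uv)+2$. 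That is not forced. Deleting an edge lowers the target $e(G)+3$ by only $1$, while Ky Fan allows the eigenvalue sum to drop by anything in $[0,2]$; applying the theorem inductively to $G-uv$ gives $\mu_1(G-uv)+\mu_2(G-uv)\le e(G)+2$, hence the drop lies in $[1,2]$, and nothing pins it at $2$. Indeed, in the extremal graphs themselves the drop is typically $1$: removing a pendant edge from $G(s,n-2-s)$ leaves $G(s-1,n-2-s)$ plus an isolated vertex, again extremal, so the Ky Fan step is strict there. Consequently the invariant-subspace condition you planned to extract from the Ky Fan equality case never becomes available, and with it collapses the proposed route to the dominating vertex and the page/pendant dichotomy.

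What then remains of the converse is your own admission that ``the bulk of the write-up'' is an unexecuted sign-and-support case analysis of the eigenvectors of $\mu_1$ and $\mu_2$; since that case analysis is precisely the hard content of \cite{LG}, the proposal as written establishes only the trivial half (the family attains the bound) plus a cited inequality. Two smaller cautions: the degree bound $\mu_2\ge d_2$ (Brouwer--Haemers with $i=2$) has exceptions --- already $G=K_2$ has $\mu_2=0<1=d_2$ --- so it must be invoked with its hypotheses; and ``tightness with slack in most degree inequalities,'' as you note yourself, means these bounds alone cannot even pin down the degree sequence, so they cannot substitute for the missing structural step.
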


Actually, for any graph $G$ with $e(G)$ edges, $G(s,\frac{e(G)-1-s}{2})\cong G(s,n-2-s)$ is also the unique graph class which satisfies $\mu_1+\mu_2=e(G)+3$,
the edge version of Theorem \ref{them1} as below is straightforward.

\begin{theorem}\label{them2}
Let $G$ be a graph with $e(G)$ edges. Then
$$
\mu_1+\mu_2\leq e(G)+3
$$ 
with equality if and only if $G\cong G(s,\frac{e(G)-1-s}{2})$, where $0\leq s\leq e(G)-3$, and $s$ and $e(G)$ have different parities.
\end{theorem}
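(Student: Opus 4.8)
The plan is to obtain Theorem~\ref{them2} from Theorem~\ref{them1} by a mere reparametrisation, introducing no new spectral estimate. For the inequality there is genuinely nothing to prove: Theorem~\ref{them1} already gives $\mu_1+\mu_2\le e(G)+3$ for an arbitrary graph, and the right-hand side involves only $e(G)$; thus the bound holds verbatim for every graph with $e(G)$ edges. All the content lies in rewriting the equality family in terms of $e(G)$ instead of $n$.

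For the equality case I would invoke Theorem~\ref{them1} directly: if $\mu_1+\mu_2=e(G)+3$ then $G\cong G(s,n-2-s)$ for some $0\le s\le n-3$. The two notations $G(s,n-2-s)$ and $G(s,\tfrac{e(G)-1-s}{2})$ are intended to name the same graph, so I would pin down the identity relating $n$, $e(G)$ and $s$ by equating their second arguments,
$$n-2-s=\frac{e(G)-1-s}{2},\qquad\text{i.e.}\qquad e(G)=2n-3-s.$$
(Equivalently, reading either parametrisation off the figure, $G(s,t)$ has $s+t+2$ vertices and $s+2t+1$ edges, which is the only structural fact the argument needs.) This identity both legitimises the substitution $n-2-s\mapsto\tfrac{e(G)-1-s}{2}$ and drives the translation of the constraints.

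It then remains to convert the hypotheses $0\le s\le n-3$ into edge language and to handle the integrality of $\tfrac{e(G)-1-s}{2}$. From the identity, $s\le n-3$ is equivalent to $n-2-s\ge 1$, hence to $\tfrac{e(G)-1-s}{2}\ge 1$, i.e.\ $s\le e(G)-3$; together with $s\ge 0$ this yields the stated range. Moreover $e(G)+s=2n-3$ is odd, so $e(G)$ and $s$ must have different parities, which is exactly the condition that $\tfrac{e(G)-1-s}{2}$ be a nonnegative integer. For the converse I would note that every $G(s,\tfrac{e(G)-1-s}{2})$ satisfying these constraints is again a graph of the form $G(s,n'-2-s)$, so Theorem~\ref{them1} guarantees it attains equality; this closes the ``if and only if''.

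The argument carries no serious obstacle---no eigenvalue computation is needed---so the only point demanding care is the bookkeeping just described, namely the exact equivalence of the two parameter ranges and the parity/integrality condition. One further subtlety worth recording is the role of isolated vertices: adjoining isolated vertices changes neither $\mu_1+\mu_2$ nor $e(G)$, so strictly speaking the extremal family is unique only among graphs without isolated vertices, precisely the setting implicit in the $n$-vertex statement of Theorem~\ref{them1}; I would state Theorem~\ref{them2} under the same convention to keep the characterisation clean.
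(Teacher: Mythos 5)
Your proposal is correct and takes essentially the same route as the paper, which derives Theorem~\ref{them2} from Theorem~\ref{them1} by exactly the reparametrisation you describe (the paper simply asserts $G(s,\frac{e(G)-1-s}{2})\cong G(s,n-2-s)$ and calls the translation straightforward, so your explicit bookkeeping---$G(s,t)$ having $s+t+2$ vertices and $s+2t+1$ edges, the range conversion $s\le n-3 \Leftrightarrow s\le e(G)-3$, and the parity condition from $e(G)+s=2n-3$---is precisely the omitted detail). Your closing observation that uniqueness must be read modulo isolated vertices, since adjoining them changes neither $\mu_1+\mu_2$ nor $e(G)$, is a legitimate refinement that the paper leaves implicit.
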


\begin{figure}[htbp!]
\setlength{\unitlength}{0.8pt}
\begin{center}
\begin{picture}(337.1,112.4)
\put(134.1,29.0){\circle*{6}}
\put(63.8,87.7){\circle*{6}}
\put(118.9,87.7){\circle*{6}}
\put(18.1,71.8){\circle*{6}}
\put(18.1,13.8){\circle*{6}}
\put(47.9,29.0){\circle*{6}}
\put(45,0.0){\makebox(0,0)[tl]{$G(s,n-2-s)$}}
\put(91.4,87.7){\circle*{4}}
\put(77.6,87.7){\circle*{4}}
\put(105.9,87.7){\circle*{4}}
\put(18.1,43.5){\circle*{4}}
\put(18.1,29.0){\circle*{4}}
\put(18.1,58.0){\circle*{4}}
\qbezier(134.1,29.0)(91.0,29.0)(47.9,29.0)
\put(0.0,52.2){\makebox(0,0)[tl]{$s$}}
\put(65,112.4){\makebox(0,0)[tl]{$n-2-s$}}
\qbezier(47.9,29.0)(55.8,58.4)(63.8,87.7)
\qbezier(63.8,87.7)(99.0,58.4)(134.1,29.0)
\qbezier(47.9,29.0)(83.4,58.4)(118.9,87.7)
\qbezier(118.9,87.7)(126.5,58.4)(134.1,29.0)
\qbezier(47.9,29.0)(33.0,50.4)(18.1,71.8)
\qbezier(47.9,29.0)(33.0,21.4)(18.1,13.8)

\put(250.9,29.7){\circle*{6}}
\put(337.1,29.7){\circle*{6}}
\put(323.4,88.5){\circle*{6}}
\put(221.9,16.0){\circle*{6}}
\put(221.1,73.2){\circle*{6}}
\qbezier(250.9,29.7)(294.0,29.7)(337.1,29.7)
\put(221.9,29.7){\circle*{4}}
\put(221.1,59.5){\circle*{4}}
\put(221.1,45.0){\circle*{4}}
\qbezier(250.9,29.7)(287.1,59.1)(323.4,88.5)
\qbezier(323.4,88.5)(330.2,59.1)(337.1,29.7)
\qbezier(250.9,29.7)(236.4,22.8)(221.9,16.0)
\qbezier(250.9,29.7)(236.0,51.5)(221.1,73.2)
\put(180,52.2){\makebox(0,0)[tl]{$n-3$}}
\put(260.3,0.7){\makebox(0,0)[tl]{$K^+_{1,n-1}$}}
\end{picture}
\end{center}
\caption{Extremal graphs}
\label{Extremal graph}
\end{figure}
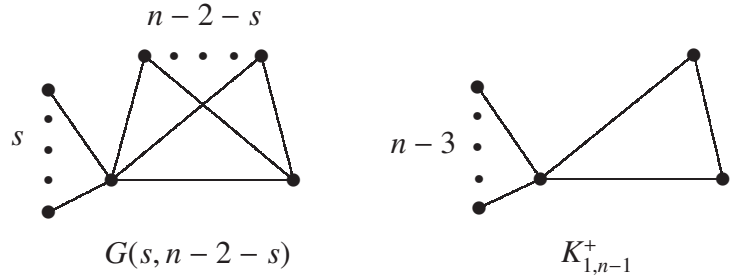


Let $S_2(G)$ be the sum of the first two largest signless Laplacian eigenvalues of $G$, and set
$$
f(G):=e(G)+3-S_2(G).
$$
Very recently, Zhou, He and Shan proved in \cite{ZHS} that $K^+_{1,n-1}$ (the star graph with an additional edge, see Fig. \ref{Extremal graph})  is the unique graph with minimum $f(G)$ among the graphs $G$ on $n$ vertices.

\begin{theorem}[\cite{ZHS}]\label{them3}
Let $G$ be a graph on $n$ vertices. Then
$$
f(G)\geq f(K^+_{1,n-1})
$$ 
with equality if and only if $G\cong K^+_{1,n-1}$.
\end{theorem}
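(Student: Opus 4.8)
The plan is to first pin down the target value $f(K^+_{1,n-1})$ exactly, and then to establish a sharp upper bound on $S_2(G)=q_1+q_2$ together with a matching equality analysis. For the value, I would exploit the symmetry of $K^+_{1,n-1}$: the partition of its vertices into the centre, the two degree-$2$ endpoints of the extra edge, and the remaining $n-3$ pendant vertices is equitable. Hence the signless Laplacian spectrum splits into the $n-3$ ``pendant'' eigenvalues (all equal to $1$, coming from the orthogonal complement of the partition) together with the three eigenvalues $\theta_1\geq\theta_2\geq\theta_3$ of the $3\times 3$ quotient matrix, which are the roots of $q^{3}-(n+3)q^{2}+3nq-4=0$. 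Since these roots sum to $n+3$ and $e(K^+_{1,n-1})=n$, we get $S_2(K^+_{1,n-1})=\theta_1+\theta_2=(n+3)-\theta_3$ and therefore
\[
f(K^+_{1,n-1})=\theta_3,
\]
the least root of that cubic, which one checks lies in $(0,1]$ and is strictly decreasing in $n$. I would also record the easy observation that $K^+_{1,n-1}=G(n-3,1)$, so it is one of the Brouwer-extremal graphs of Theorem \ref{them1}.

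The core is the inequality $q_1+q_2\leq e(G)+3-\theta_3$, i.e. $f(G)\geq f(K^+_{1,n-1})$. First I would reduce to connected graphs without isolated vertices: an isolated vertex adds only the eigenvalue $0$ and changes neither $e$ nor $S_2$, so it is removed by induction on $n$ (using that $\theta_3$ decreases in $n$), and a comparison of the top two eigenvalues of a disconnected graph, which are realised inside at most two components, lets one take a minimiser to be connected. For the main bound I would work directly with $Q(G)$ through the edge form $x^{\top}Q(G)x=\sum_{uv\in E(G)}(x_u+x_v)^2$ and the variational description $q_1+q_2=\max\operatorname{tr}\bigl(P^{\top}Q(G)P\bigr)$ over $n\times 2$ matrices $P$ with orthonormal columns. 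The essential difficulty is that the plain signless analogue of the $k=2$ Brouwer bound only yields $q_1+q_2\leq e+3$, which is \emph{never} attained; the whole point is to recover the missing $\theta_3$. To do so I expect to isolate the contribution of a maximum-degree vertex together with its incident star, estimate the remaining quadratic form by the degree sequence, and thereby bound the slack from below by exactly the least root of the cubic above.

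The sharp equality analysis is where I expect the main obstacle. One cannot route through the Laplacian: $\mu_1+\mu_2\geq q_1+q_2$ is false in general (already for $K_5$), so the refinement must be carried out intrinsically for $Q(G)$. The delicate part is proving \emph{strict} monotonicity, namely that every graph other than $K^+_{1,n-1}$ can be pushed strictly closer to it — by redistributing edges onto a dominant vertex while retaining a single triangle — in a way that strictly decreases $f$, which requires controlling the sign of the change in $q_1+q_2$ via first-order eigenvector estimates. In particular one must rule out the competing Brouwer-extremal books $G(s,n-2-s)$ with $s<n-3$: these also satisfy $\mu_1+\mu_2=e+3$, but they carry more triangles, hence more edges, and I expect to show directly that $f$ increases as $s$ decreases from $n-3$. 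Establishing that the transformation always decreases $f$ and that $K^+_{1,n-1}$ is its unique fixed point is the crux of the argument.
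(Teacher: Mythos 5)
Your first paragraph is sound and matches the standard computation (it is essentially the paper's Lemma~\ref{ff} quotient-matrix calculation specialized to $a=n-1$): the three-class partition is equitable, the quotient cubic is $x^{3}-(n+3)x^{2}+3nx-4$, its two largest roots exceed $1$ while the remaining eigenvalues equal $1$, and since the roots sum to $n+3=e(K^{+}_{1,n-1})+3$ one indeed gets $f(K^{+}_{1,n-1})=\theta_3\in(0,1]$, decreasing in $n$. Your reductions are also fine: deleting an isolated vertex and invoking strict monotonicity of $\theta_3$ in $n$ works, and when the top two eigenvalues live in two different components, $q_1\le e+1$ per component gives $S_2\le e+2$, hence $f\ge 1>\theta_3$ (this is exactly how the present paper handles its own Case~1).

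The genuine gap is that the core of the theorem --- every connected $G\not\cong K^{+}_{1,n-1}$ on $n$ vertices satisfies $S_2(G)<e(G)+3-\theta_3(n)$, with the matching uniqueness --- is never actually proved: your second and third paragraphs are a plan (``I expect to isolate\dots'', ``I expect to show\dots''), and the mechanisms proposed do not obviously close. Concretely: (i) bounding the slack $e+3-q_1-q_2$ below by \emph{exactly} the least root of the quotient cubic via a maximum-degree-star decomposition is precisely the content of the theorem, not a routine estimate; the available tools (Fan's inequality, Lemma~\ref{KF}, and edge interlacing, Lemma~\ref{interlace}) are one-sided, and under a single edge move $f$ changes by $\Delta e-\Delta S_2$ with $0\le S_2(G)-S_2(G-uv)\le 2$, so the sign of $\Delta f$ is genuinely ambiguous and your strict-descent/``unique fixed point'' argument has no a priori sign control; (ii) the descent cannot stay within a fixed edge count, since a graph with $e(G)\neq n$ can never reach $K^{+}_{1,n-1}$ by redistributing edges, and the target $\theta_3=\theta_3(n)$ must then be tracked across the moves; (iii) eliminating the competing books $G(s,n-2-s)$ with $s<n-3$ is asserted, not shown. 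Note finally that the present paper does not prove this statement at all --- it is imported wholesale from \cite{ZHS}, where the proof is a substantial case analysis with eigenvalue estimates --- so while your boundary computations are correct and your overall framing is reasonable, what you have written is a research programme for the hard inequality rather than a proof of it.
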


For the signless Laplacian spectrum, since there is no graph satisfying $S_2(G)=e(G)+3$, the edge version of Theorem \ref{them3} is not straightforward as the Laplacian spectrum.

In this paper, our purpose is to present the edge version of Theorem \ref{them3}, more precisely, we are able to 
prove that $K^+_{1,n-1}$ is also the unique graph  with minimum $f(G)$ among the graphs $G$ having $e(G)$ edges.  

\begin{theorem}\label{them4}
Let $G$ be a graph with $e(G) \ge 4$ edges. Then 
$$
f(G)\geq f(K^+_{1,e(G)-1})
$$ 
with equality if and only if $G\cong K^+_{1,e(G)-1}$.
\end{theorem}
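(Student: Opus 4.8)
\noindent
The plan is to recast the problem. Since $e(G)=m$ is \emph{fixed}, minimizing $f(G)=m+3-S_2(G)$ is literally the same as maximizing $S_2(G)=q_1(G)+q_2(G)$ over all graphs with $m$ edges, so I would aim to prove $S_2(G)\le S_2(K^+_{1,m-1})$ with equality only for $K^+_{1,m-1}$. Two preliminary reductions come first. Adding or deleting isolated vertices changes neither $e(G)$ nor $q_1,q_2$, so I may assume $\delta(G)\ge 1$, whence $n(G)\le 2m$. Next I would pin down the target value: writing $g(n):=f(K^+_{1,n-1})$, the equitable partition $\{c\},\{a,b\},\{\text{remaining leaves}\}$ shows that the three nontrivial signless Laplacian eigenvalues of $K^+_{1,n-1}$ are the roots of $\lambda^{3}-(n+3)\lambda^{2}+3n\lambda-4$ (all other eigenvalues equal $1$), so $g(n)$ is exactly the least root of this cubic; implicit differentiation (using $\partial_n = \lambda(3-\lambda)>0$ at the least root) then gives that $g$ is strictly decreasing for $n\ge 3$, equivalently $S_2(K^+_{1,n-1})=n+3-g(n)$ is strictly increasing in $n$. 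I would then argue by induction on $m$, verifying the finitely many graphs with $m=4$ edges directly as the base case.

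\noindent
For the inductive step I would dispose of disconnected $G$ first. If $G$ has components $C_1,\dots,C_t$ with $t\ge 2$ and $m_i=e(C_i)\ge 1$, then $S_2(G)$ is the sum of the two largest eigenvalues taken from the spectra of the $C_i$. If both come from a single component $C$, then $S_2(G)=S_2(C)$ with $e(C)<m$, and the induction hypothesis together with monotonicity of $S_2(K^+_{1,\cdot})$ (and a direct check when $e(C)\le 3$) yields $S_2(G)<S_2(K^+_{1,m-1})$. If they come from two different components, then $S_2(G)\le q_1(C_i)+q_1(C_j)$, and the elementary bound $q_1(H)\le\max_{uv\in E(H)}(d_u+d_v)\le e(H)+1$ gives $S_2(G)\le (m_i+1)+(m_j+1)\le m+2<S_2(K^+_{1,m-1})$, the last inequality holding because $g(m)<1$ for $m\ge 4$.

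\noindent
It then remains to treat connected $G$, which has $n\le m+1$. If $n\le m$, i.e. $G$ contains a cycle, Theorem \ref{them3} applied to $G$ gives $S_2(G)\le m+3-g(n)\le m+3-g(m)=S_2(K^+_{1,m-1})$, and tracking equality settles the uniqueness: equality in Theorem \ref{them3} forces $G\cong K^+_{1,n-1}$, which has $n$ edges, so $e(G)=m$ forces $n=m$ and $G\cong K^+_{1,m-1}$. The decisive and genuinely harder case is $n=m+1$, that is, $G$ a tree, where Theorem \ref{them3} is just too weak: it only yields $S_2(G)<m+3-g(m+1)$, which overshoots the target by $g(m)-g(m+1)>0$.

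\noindent
For a tree $T$, being bipartite, one has $S_2(T)=\mu_1(T)+\mu_2(T)$, so I can work entirely with the Laplacian. My plan here is to show that among all trees with $m$ edges the quantity $\mu_1+\mu_2$ is maximized by the balanced double star $D_m$, via a grafting/shifting transformation that strictly increases $\mu_1+\mu_2$ while keeping the number of edges fixed and deforms an arbitrary tree toward a double star, and then to finish with the explicit comparison $S_2(D_m)<S_2(K^+_{1,m-1})$. For the double star the two relevant eigenvalues are available in closed form from the $2\times 2$ symmetric and antisymmetric blocks of its quotient matrix, and they give a deficiency $m+3-S_2(D_m)$ of order $1/m$, comfortably larger than $g(m)\sim 4/(3m)$, so the comparison goes through with room to spare. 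I expect the real difficulty to lie precisely in proving that the double star is the tree-maximizer—namely that the transformation really does increase $q_1+q_2$—while the ensuing quantitative comparison with $K^+_{1,m-1}$ is routine.
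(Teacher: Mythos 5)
Most of your architecture is sound and, outside the tree case, runs parallel to the paper's proof. Your reformulation as maximizing $S_2$ at fixed $m$, the identification of $g(n)=f(K^+_{1,n-1})$ with the least root of $\lambda^3-(n+3)\lambda^2+3n\lambda-4$, and the implicit-differentiation proof that $g$ is strictly decreasing are all correct: the cubic is negative at $0$ and at $3$ and positive at $1$ (for $n\ge 4$), so the least root is simple and lies in $(0,1)$, where $\partial_\lambda>0$ while $\partial_n=\lambda(3-\lambda)>0$. This is a legitimate alternative to the paper's Lemma \ref{ff}, which instead compares shifted characteristic polynomials of the second additive compound matrix. Your two-different-components bound $S_2(G)\le m+2<m+3-g(m)$ is exactly the paper's Case 1; your treatment of connected graphs containing a cycle (Theorem \ref{them3} plus monotonicity of $g$, with equality tracking forcing $n=m$) is essentially verbatim the paper's Subcase 2; the normalization $\delta(G)\ge 1$ and the use of $S_2(T)=\mu_1(T)+\mu_2(T)$ for bipartite $T$ are both fine; and the induction you set up is only really invoked in the one-dominating-component disconnected situation, which the paper handles more directly via $f(G)\ge f(H)$ for the component $H$ carrying $S_2(G)$.

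The genuine gap is the tree case, and you have located it correctly but not closed it. Your plan — show by a grafting/shifting transformation that the balanced double star maximizes $\mu_1+\mu_2$ among trees with $m$ edges — is left entirely unexecuted, and you yourself concede it is ``the real difficulty''; as written, the theorem rests on an unproven claim. The claim is in fact true: it is precisely the theorem of Guan, Zhai and Wu \cite{Guan}, cited in this paper, so your route could be completed by citation rather than by reconstructing a grafting argument (which would amount to reproving an entire published paper). But note that the full extremal characterization is much more than you need: any upper bound on $S_2$ over trees that beats $m+3-g(m)$ suffices, and such a bound exists in closed form. The paper uses exactly this shortcut: Fritscher et al.'s inequality $f(T)>\frac{2}{n}=\frac{2}{e(T)+1}$ (Lemma \ref{Ltree}) combined with $g(m)<\frac{1.5}{m}$ (Corollary \ref{esn+}) disposes of every tree with $m\ge 7$ in one line, with $m=4,5,6$ checked directly. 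Your quantitative finish for the double star does check out — the deficiency of the balanced double star is the small root of $\lambda^2-(a+3)\lambda+2$, which exceeds $\frac{2}{a+3}\approx\frac{4}{m}$, comfortably above $g(m)\sim\frac{4}{3m}$ — so granting the maximizer claim your argument closes; but as it stands the proposal substitutes, for the paper's short appeal to \cite{Fritscher}, its single hardest unproven step.
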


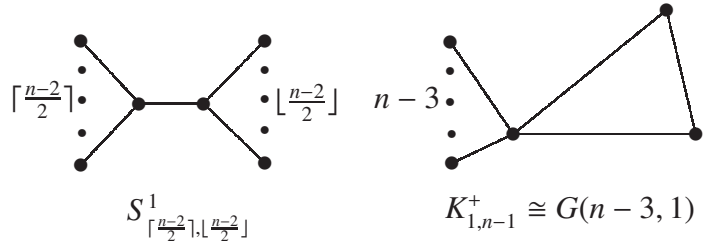
\begin{figure}[htbp!]
\setlength{\unitlength}{0.8pt}
\begin{center}
\begin{picture}(294.5,88.5)


\put(232.7,29.0){\circle*{6}}
\put(319.0,29.0){\circle*{6}}
\put(305.2,87.7){\circle*{6}}
\put(203.0,72.5){\circle*{6}}
\put(203.7,15.2){\circle*{6}}
\put(203.7,29.0){\circle*{4}}
\put(203.0,58.7){\circle*{4}}
\put(203.0,44.2){\circle*{4}}
\put(167,51.5){\makebox(0,0)[tl]{$n-3$}}
\put(200.2,0.0){\makebox(0,0)[tl]{$K^+_{1,n-1}\cong G(n-3,1)$}}

\qbezier(232.7,29.0)(275.9,29.0)(319.0,29.0)
\qbezier(232.7,29.0)(269.0,58.4)(305.2,87.7)
\qbezier(305.2,87.7)(312.1,58.4)(319.0,29.0)
\qbezier(232.7,29.0)(218.2,22.1)(203.7,15.2)
\qbezier(232.7,29.0)(217.9,50.8)(203.0,72.5)
\qbezier(55.8,43.5)(71.1,43.5)(86.3,43.5)

\put(55.8,43.5){\circle*{6}}
\put(86.3,43.5){\circle*{6}}
\put(115.3,73.2){\circle*{6}}
\put(115.3,15.2){\circle*{6}}
\put(28.3,14.5){\circle*{6}}
\put(28.3,73.2){\circle*{6}}
\put(115.3,59.5){\circle*{4}}
\put(115.3,31.2){\circle*{4}}
\put(115.3,45.7){\circle*{4}}
\put(29.0,59.5){\circle*{4}}
\put(29.0,29.7){\circle*{4}}
\put(29.0,45.0){\circle*{4}}
\put(-5,54.4){\makebox(0,0)[tl]{$\lceil \frac{n-2}{2}\rceil$}}
\put(120,52.9){\makebox(0,0)[tl]{$\lfloor \frac{n-2}{2}\rfloor$}}
\put(50,0.7){\makebox(0,0)[tl]{$S^1_{\lceil \frac{n-2}{2}\rceil,\lfloor \frac{n-2}{2}\rfloor}$}}

\qbezier(55.8,43.5)(42.1,58.4)(28.3,73.2)
\qbezier(55.8,43.5)(42.1,29.0)(28.3,14.5)
\qbezier(86.3,43.5)(100.8,58.4)(115.3,73.2)
\qbezier(86.3,43.5)(100.8,29.4)(115.3,15.2)
\end{picture}
\end{center}
\caption{Extremal graphs with $c(G)=0$ and $1$}
\label{Extremal graph2}
\end{figure}


The dimension of cycle space of $G$ is defined as $c(G)=e(G)-n(G)+\omega(G)$, where $\omega(G)$ denotes the number of connected components of $G$. It is easy to see when $G$ is connected, $c(G)=e(G)-n(G)+1$, $G$ is a tree if $c(G)=0$; $G$ is a unicyclic graph if $c(G)=1$.

Guan, Zhai and Wu proved in \cite{Guan} that $S^1_{\lceil \frac{n-2}{2}\rceil,\lfloor \frac{n-2}{2}\rfloor}$ (see Fig. \ref{Extremal graph})  is the unique extremal tree that attains the bound of $\mu_1+\mu_2$, which also minimum $f(G)$ among the tree graphs with order $n\geq 4$.

Actually, for any connected graph $G$ with $n$ vertices and $1\leq c(G)\leq n-2$, $G(n-2-c(G),c(G))\cong G(s,n-2-s)$ is also the unique graph class which satisfies $\mu_1+\mu_2=e(G)+3$.
For the signless Laplacian spectrum, since there is no graph satisfying $S_2(G)=e(G)+3$, this conclusion is not straightforward as the Laplacian spectrum.
We present a conjecture on the uniqueness of the extremal graph.

\begin{conj}\label{conj}
Among all connected graphs with $n$ vertices and $1\leq c(G)\leq n-2$, $G(n-2-c(G),c(G))\cong G(s,n-2-s)$ is the unique graph with maximal value of $S_2(G)$.
\end{conj}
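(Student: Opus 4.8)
The plan is to fix $n$ and $c$, so that $e(G)=n-1+c$ is determined, and to maximize $S_2(G)$, which for fixed $e(G)$ is the same as minimizing $f(G)=e(G)+3-S_2(G)$. The case $c=1$ is already settled: here $n=e(G)$ and $G(n-3,1)\cong K^+_{1,n-1}=K^+_{1,e(G)-1}$, so the assertion is exactly the specialization of Theorem \ref{them4} to $n$-vertex unicyclic graphs. I would therefore assume $c\ge 2$ throughout. First I would pin down the target value: $G(n-2-c,c)$ admits the equitable partition $\{u\},\{v\},C_3,C_4$ into the dominating vertex $u$ (of degree $n-1$), the second centre $v$ (of degree $c+1$), the $c$ triangle vertices $C_3$, and the $n-2-c$ pendants $C_4$, with signless Laplacian divisor matrix
\[
B=\begin{pmatrix} n-1 & 1 & c & n-2-c\\ 1 & c+1 & c & 0\\ 1 & 1 & 2 & 0\\ 1 & 0 & 0 & 1\end{pmatrix}.
\]
The remaining $n-4$ signless Laplacian eigenvalues are $2$ (with multiplicity $c-1$) and $1$ (with multiplicity $n-3-c$), both dominated by the top two eigenvalues of $B$; hence $S_2(G(n-2-c,c))$ equals the sum of the two largest eigenvalues of $B$, an explicit function of $n$ and $c$.

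The second step is a structural reduction. I would show that some $S_2$-maximizer $G^{\ast}$ has a dominating vertex, i.e.\ $G^{\ast}=K_1\vee H$ with $H$ a graph on $n-1$ vertices and $c$ edges. The natural tool is a Kelmans-type shifting operation, which preserves $n$ and $e(G)$ and is known to be monotone for the signless Laplacian spectral radius $q_1$; the content is to show that it does not decrease the sum $q_1+q_2$, after which iterating produces a universal vertex. With a dominating vertex in hand, write
\[
Q(K_1\vee H)=\begin{pmatrix} n-1 & \mathbf 1^{\top}\\ \mathbf 1 & Q(H)+I\end{pmatrix}.
\]
Splitting an eigenvector according to whether its $H$-part is orthogonal to $\mathbf 1$ or not, every eigenvector of $Q(H)$ orthogonal to $\mathbf 1$ with eigenvalue $q$ yields the eigenvalue $q+1$ of $K_1\vee H$, while the remaining eigenvalues come from a small mixed quotient involving the apex. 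Modulo the universal-vertex step this reduces the problem to the following: \emph{among all $H$ on $n-1$ vertices with $c$ edges, maximize $q_1(K_1\vee H)+q_2(K_1\vee H)$}, the claim being that the maximizer is $H^{\ast}=K_{1,c}\cup(n-2-c)K_1$.

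For the last step I would compare $H^{\ast}$ with an arbitrary $H$. The intuition is that $q_1$ is increased by concentrating the degrees of $H$ (which favours the star $H^{\ast}$, producing a second centre of degree $c+1$), whereas $q_2$ is governed by the largest eigenvalue of $Q(H)$ that is carried by a vector orthogonal to $\mathbf 1$. I expect this comparison to be the main obstacle, and it is genuinely delicate: the principal competitor is a sparse spread-out $H$, for instance the matching $H=cK_2\cup(n-1-2c)K_1$ (when $2c\le n-1$), for which any two distinct edges give an eigenvector of $Q(H)$ orthogonal to $\mathbf 1$ with eigenvalue $2$, forcing $q_2(K_1\vee H)\ge 3$ \emph{regardless of $n$}. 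For the star $H^{\ast}$ the quantity $q_2$ instead arises from the mixed quotient and must be estimated sharply, since it exceeds $3$ only once $n$ is large enough relative to $c$.

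Thus the crux is a quantitative eigenvalue estimate showing that the gain of $H^{\ast}$ in $q_1$ (and, for large $n$, also in $q_2$) strictly beats the guaranteed $q_2\ge 3$ of the matching and of every other configuration. Because the two candidate structures are nearly tied when $n$ is close to $c+2$, the estimate has to be quantitatively sharp and the smallest admissible values of $n$ may have to be examined separately. Once this inequality is established together with its equality analysis, uniqueness of $G(n-2-c,c)$ follows, and combined with the universal-vertex reduction it yields the stated conjecture.
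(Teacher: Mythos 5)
You should note first that the statement you were asked to prove is, in the paper, precisely a \emph{conjecture}: the authors give no proof of it, observing only that Theorem \ref{them3} (equivalently Theorem \ref{them4} applied with $e(G)=n$) settles the case $c(G)=1$. So there is no paper proof to compare against, and your submission must stand on its own as a purported resolution of an open problem. As such it does not succeed: it is a reduction program with the two load-bearing steps left open. To your credit, the ingredients you actually compute are correct: the quotient matrix $B$ of $G(n-2-c,c)$ and the remaining eigenvalues $2$ (multiplicity $c-1$) and $1$ (multiplicity $n-3-c$) are right; the block form of $Q(K_1\vee H)$ and the lift $q\mapsto q+1$ on eigenvectors of $Q(H)$ orthogonal to $\mathbf 1$ are standard and correctly applied; $K_1\vee\bigl(K_{1,c}\cup(n-2-c)K_1\bigr)\cong G(n-2-c,c)$; and your observation that the matching $H=cK_2\cup(n-1-2c)K_1$ forces $q_2(K_1\vee H)\geq 3$ correctly identifies the principal competitor. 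But you yourself concede that the decisive quantitative comparison between $H^{\ast}$ and all other $H$ is not carried out, so even granting your structural reduction the argument is incomplete.

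The more serious problem is that your structural reduction, as stated, is false. You propose a Kelmans-type shifting lemma asserting that the shift does not decrease $q_1+q_2$, and you would iterate it to produce a dominating vertex. For bipartite graphs the signless Laplacian and Laplacian spectra coincide, and among trees the balanced double star $S^1_{\lceil\frac{n-2}{2}\rceil,\lfloor\frac{n-2}{2}\rfloor}$ --- not the star --- uniquely maximizes $\mu_1+\mu_2$ (Guan--Zhai--Wu, cited in the paper). Concretely, $P_4$ has $q_1+q_2=4+\sqrt{2}>5=q_1(K_{1,3})+q_2(K_{1,3})$, yet a single Kelmans shift carries $P_4$ to $K_{1,3}$; iterating shifts toward a universal vertex can therefore strictly decrease $S_2$. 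Any correct version of your lemma must exploit $c\geq 1$ in an essential way --- the conjectured extremal graph has a dominating vertex precisely because $c\geq 1$, while the $c=0$ extremal (the double star) does not --- and your outline contains no mechanism that distinguishes $c\geq 2$ from $c=0$. So both the universal-vertex step and the final extremal comparison remain unproved, and the first is demonstrably untrue without additional hypotheses; the proposal is a reasonable research plan, not a proof.
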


The Theorem \ref{them3} shows that Conjecture \ref{conj} is true for graphs with $c(G)=1$.

The organization of the remaining of the paper is as follows.
In Section 2, we give some lemmas which are useful. In Section 3, we give the proof of Theorem \ref{them4}.

\section{Preliminaries}

We begin this section with some definitions and notations. 
For convenience, we would calculate $S_2(G)$ by virtue of the additive compound matrix, which is introduced via the following three definitions.




\begin{defi} (Lexicographically ordered $k$-sets)
Fix a number $k$ between $1$ and $n$, inclusive. Consider all possible sets formed by $k$ distinct numbers between $1$ and $n$, and order them in a  lexicographic order as 
$$
S_1< S_2< \ldots< S_{\tbinom{n}{k}}.
$$
\end{defi}


\begin{defi} (Compound matrix)
Let $A$ be a complex matrix  of order $n$, and $1 \le k \le n$.
\begin{itemize}
    \item 
Let $A_{i,j}$ be the submatrix of $A$ of order $k$ induced by the rows indexed by $S_i$ and the columns indexed by $S_j$, where $1\le i,j \le \tbinom{n}{k}$.

    \item
Define $C_k(A)$ as  the $k$-th   compound matrix of $A$, which is the matrix of size $ \tbinom{n}{k} \times \tbinom{n}{k}$ whose $(i,j)$-th entry is the determinant of  $A_{i,j}$.
\end{itemize}
\end{defi}

\begin{defi} (Additive compound matrix)
Given $1 \le k \le n$.
The matrix $C_k(I_n+tA)$ is an $ \tbinom{n}{k} \times \tbinom{n}{k}$ matrix,
each entry of which is a polynomial in $t$ of degree at most $k$, and the $k$-th   additive compound matrix is defined as
$$
\Delta_k(A)= \frac{d}{dt}C_k(I_n+tA)\big\vert_{t=0},
$$
where $I_n$ represents the identity matrix of order $n$.
\end{defi}

For additive compound matrix, the following lemma is  well-known.

\begin{lemma}[\cite{MF}]\label{MF}
If $\lambda_1,\ \lambda_2,\ldots, \lambda_n$ are all the eigenvalues of $A$,
then the eigenvalues of $\Delta_k(A)$ are the $\tbinom{n}{k}$  distinct sums of the $\lambda_i$ taken $k$ at a time. That is to say, each eigenvalue of $\Delta_k(A)$ is of the form 
$$
\lambda_{i_1} +\lambda_{i_2}+\dots +\lambda_{i_k}
$$ 
with $1 \le i_1 < i_2 < \dots < i_k \le n$.
\end{lemma}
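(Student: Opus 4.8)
The plan is to reduce the computation of the spectrum of $\Delta_k(A)$ to the case of a triangular matrix by exploiting the \emph{multiplicativity} of the ordinary compound operation, $C_k(XY)=C_k(X)C_k(Y)$, which is the Cauchy--Binet formula read off entrywise. Granting this identity, together with $C_k(I_n)=I_{\tbinom{n}{k}}$, one gets $C_k(X^{-1})=C_k(X)^{-1}$ for invertible $X$, so $C_k$ carries similar matrices to similar matrices. I would first record these facts, since everything else rests on them.

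First I would invoke the Schur triangularization $A=UTU^{*}$, where $U$ is unitary and $T$ is upper triangular with the eigenvalues $\lambda_1,\dots,\lambda_n$ on its diagonal (in any fixed order). Then $I_n+tA=U(I_n+tT)U^{*}$, and multiplicativity gives
$$
C_k(I_n+tA)=C_k(U)\,C_k(I_n+tT)\,C_k(U)^{-1}
$$
for every $t$. Since the conjugating factor $C_k(U)$ does not depend on $t$, differentiating at $t=0$ yields $\Delta_k(A)=C_k(U)\,\Delta_k(T)\,C_k(U)^{-1}$; in particular $\Delta_k(A)$ and $\Delta_k(T)$ have the same spectrum, so it suffices to treat the triangular matrix $T$.

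The heart of the argument is the claim that the compound of an upper triangular matrix is again upper triangular in the lexicographic ordering of $k$-sets. For $S_i=\{r_1<\cdots<r_k\}$ and $S_j=\{c_1<\cdots<c_k\}$ with $S_i>S_j$, let $p$ be the first place where they differ, so $r_a=c_a$ for $a<p$ and $r_p>c_p$. Because $T$ is upper triangular, each row $r_a$ with $a\ge p$ satisfies $r_a\ge r_p>c_p$, so its nonzero entries $T_{r_a,c_b}$ require $c_b\ge r_a>c_p$, i.e. $b>p$; thus the rows $r_p,\dots,r_k$ of the submatrix $T_{S_i,S_j}$ have all their nonzero entries confined to the columns $c_{p+1},\dots,c_k$. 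This is $k-p+1$ rows supported on only $k-p$ columns, forcing linear dependence and hence $\det T_{S_i,S_j}=0$. Applying this to $I_n+tT$ (also upper triangular) shows $C_k(I_n+tT)$ is upper triangular with diagonal entries equal to the principal minors $\prod_{r\in S}(1+t\lambda_r)$. Differentiating these at $t=0$ gives the diagonal of $\Delta_k(T)$, namely $\sum_{r\in S}\lambda_r$; since the eigenvalues of a triangular matrix are its diagonal entries counted with multiplicity, these sums, taken over all $\tbinom{n}{k}$ choices of $S$, are exactly the eigenvalues of $\Delta_k(T)=\Delta_k(A)$.

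The step I expect to be the main obstacle is the triangularity claim together with the bookkeeping that the relevant principal minor of an upper triangular matrix is the product of the corresponding diagonal entries; the pigeonhole argument above is clean but must be aligned carefully with the chosen lexicographic convention so that ``$S_i>S_j$'' really corresponds to entries below the diagonal. An alternative that sidesteps Schur is to prove the identity first for diagonalizable $A$—where conjugating a diagonal matrix makes $C_k$ and $\Delta_k$ transparent—and then extend to all $A$ by density, using that $A\mapsto\Delta_k(A)$ is linear (hence continuous) and that eigenvalues depend continuously on the matrix entries; I would keep this in reserve in case the triangular bookkeeping becomes unwieldy.
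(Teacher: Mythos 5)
Your proof is correct. There is nothing in the paper to compare it with: the lemma is quoted from Fiedler's book \cite{MF} without proof, so your argument stands as a self-contained derivation, and it is essentially the classical one. The key steps all check out: multiplicativity $C_k(XY)=C_k(X)C_k(Y)$ (Cauchy--Binet) together with $C_k(I_n)=I_{\tbinom{n}{k}}$ gives $C_k(U^{-1})=C_k(U)^{-1}$, so with Schur's $A=UTU^{*}$ the conjugating factor is independent of $t$ and passes through the derivative at $t=0$, reducing everything to an upper triangular $T$. This generality matters here: the paper applies the lemma to the quotient matrix $Q_a^\pi$, which is \emph{not} symmetric, so a proof valid only for symmetric or diagonalizable matrices would be inadequate without your density fallback (which, for the record, is also sound, since $\Delta_k$ is linear in $A$ and eigenvalues vary continuously). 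Your pigeonhole argument for triangularity of the compound is correct and correctly aligned with the paper's convention: with $S_1<S_2<\cdots$ in lexicographic order, the subdiagonal entries $(i,j)$, $i>j$, are precisely those with $S_i>S_j$, and your observation that rows $r_p,\dots,r_k$ of $T_{S_i,S_j}$ are supported on the $k-p$ columns $c_{p+1},\dots,c_k$ forces $\det T_{S_i,S_j}=0$ (in the boundary case $p=k$ the row indexed by $r_k$ is simply zero). Since the subdiagonal entries of $C_k(I_n+tT)$ vanish identically in $t$, their derivatives vanish as well, so $\Delta_k(T)$ is genuinely upper triangular with diagonal entries $\sum_{r\in S}\lambda_r$, and the spectrum of $\Delta_k(A)$, with multiplicities, is exactly the multiset of $k$-fold sums of eigenvalues, as the lemma asserts.
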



Let us recall the bounds of $f(K^+_{1,n-1})$ obtained in \cite{ZHS}.

\begin{lemma}[\cite{ZHS}]\label{sn+}
Let $G$ be isomorphic to $K^+_{1,n-1}$. Then 
$$
\frac{1.3}{n}<f(G)<\frac{1.5}{n} 
$$
for any $n\geq7$.
\end{lemma}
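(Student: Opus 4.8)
The plan is to compute the signless Laplacian spectrum of $K^+_{1,n-1}$ explicitly and to reduce $f(K^+_{1,n-1})$ to the smallest root of an explicit cubic, which I then bracket. First I would record that $K^+_{1,n-1}$ has $e(G)=n$ edges, a center of degree $n-1$, two mutually adjacent vertices of degree $2$, and $n-3$ pendant vertices. The partition of $V(G)$ into the singleton center, the pair of degree-$2$ vertices, and the $n-3$ pendants is equitable for $Q(G)$, so the spectrum splits as the eigenvalue $1$ with multiplicity $n-3$ (from vectors antisymmetric inside the last two cells) together with the three eigenvalues of the quotient matrix
$$
B=\begin{pmatrix} n-1 & 2 & n-3 \\ 1 & 3 & 0 \\ 1 & 0 & 1 \end{pmatrix}.
$$
A direct expansion gives $g(q):=\det(qI-B)=q^3-(n+3)q^2+3nq-4$, whose three roots I denote $q'_1\ge q'_2\ge q'_3$; by Vieta $q'_1+q'_2+q'_3=n+3$ and $q'_1q'_2q'_3=4$.

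Next I would locate the roots by sign changes of $g$. Evaluating at convenient points gives $g(0)=-4<0$, $g(1)=2n-6>0$, $g(2)=2n-8>0$, $g(3)=-4<0$, $g(n)=-4<0$, and $g(n+3)=3n(n+3)-4>0$ (all for $n\ge 5$), so $g$ has exactly one root in each of $(0,1)$, $(2,3)$, $(n,n+3)$. Since the root in $(2,3)$ exceeds the repeated eigenvalue $1$, the two largest signless Laplacian eigenvalues of $K^+_{1,n-1}$ are exactly $q'_1$ and $q'_2$, whence
$$
S_2(K^+_{1,n-1})=q'_1+q'_2=(n+3)-q'_3,
$$
and therefore $f(K^+_{1,n-1})=e(G)+3-S_2=n+3-\bigl((n+3)-q'_3\bigr)=q'_3$. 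This identity — that $f(K^+_{1,n-1})$ equals the smallest root $q'_3\in(0,1)$ of $g$ — is the conceptual crux and reduces the lemma to a one-variable estimate.

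Finally I would bracket $q'_3$. Because $g(0)<0<g(1)$ and $q'_3$ is the first positive root, for any $a\in(0,1)$ one has $a<q'_3$ iff $g(a)<0$. Taking $a=1.3/n$ and $a=1.5/n$ (both $<1$), a short computation gives
$$
g\!\left(\tfrac{1.3}{n}\right)=-0.1-\tfrac{1.69}{n}-\tfrac{5.07}{n^2}+\tfrac{2.197}{n^3}<0,
$$
so $q'_3>1.3/n$, while $g(1.5/n)=0.5-\tfrac{2.25}{n}-\tfrac{6.75}{n^2}+\tfrac{3.375}{n^3}$ is positive exactly when $0.5n^3-2.25n^2-6.75n+3.375>0$, which holds for $n\ge 7$ (and fails at $n=6$), giving $q'_3<1.5/n$. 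Combining yields $1.3/n<f(K^+_{1,n-1})=q'_3<1.5/n$ for $n\ge 7$. The main obstacle is not any deep structure but getting the root ordering right — establishing that the root in $(2,3)$, rather than the repeated eigenvalue $1$, is the second largest, so that $f=q'_3$ — and controlling the small root $q'_3\approx\tfrac{4}{3n}$ tightly; the hypothesis $n\ge 7$ is not cosmetic but is precisely the threshold making $g(1.5/n)>0$, so the upper bound is essentially sharp at the stated range.
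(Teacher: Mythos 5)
Your proof is correct, and I verified each step: the quotient matrix $B$ for the equitable partition $\{\mbox{center}\}\cup\{\mbox{degree-2 pair}\}\cup\{\mbox{pendants}\}$ is right, its characteristic polynomial is indeed $g(q)=q^3-(n+3)q^2+3nq-4$, the eigenvalue $1$ does occur with multiplicity $n-3$ (note this is $n-4$ vectors antisymmetric within the pendant cell plus $1$ from the degree-2 cell, which your parenthetical correctly covers), the sign evaluations $g(0)=-4$, $g(1)=2n-6$, $g(2)=2n-8$, $g(3)=-4$, $g(n)=-4$, $g(n+3)=3n(n+3)-4$ all check out, and the reduction $f(K^+_{1,n-1})=q_3'$ via Vieta is exact. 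A point of context: this paper does not prove the lemma at all — it imports it from the reference [ZHS] — so there is no in-paper proof to compare against line by line. The closest in-paper analogue is the proof of Lemma \ref{ff}, which uses exactly your equitable partition and the same cubic (there written as $x^3-(a+4)x^2+3(a+1)x-4$ with $a=n-1$, identical to your $g$) together with the same $P(1)>0$, $P(3)<0$ sign trick to separate the two largest roots from the eigenvalue $1$; but that lemma then passes to the second additive compound matrix to get monotonicity, whereas your Vieta identity $S_2=(n+3)-q_3'$, hence $f=q_3'$, makes the numerical bounds a one-variable root-bracketing problem and avoids compound matrices entirely. That identity is the clean conceptual move here, and your observation that $n=7$ is the exact threshold (since $g(1.5/n)>0$ fails at $n=6$) confirms the stated range is sharp.

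One line you should add for completeness: you assert that $0.5n^3-2.25n^2-6.75n+3.375>0$ \emph{for all} $n\ge 7$ after effectively checking it at $n=7$ (where it equals $17.375$); this needs the remark that the cubic is increasing on $[7,\infty)$, e.g.\ its derivative $1.5n^2-4.5n-6.75$ is positive already at $n=7$ (value $35.25$) and increasing. Similarly, for the lower bound, the inequality $2.197/n^3<0.1$ holds for all $n\ge 3$, so $g(1.3/n)<0$ throughout the range. These are trivial fixes, not gaps; the argument is sound as structured.
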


Since $e(K^+_{1,n-1})=n(K^+_{1,n-1})$, the following result is straightforward.

\begin{cor}\label{esn+}
Let $G$ be isomorphic to $K^+_{1,e(G)-1}$. Then 
$$
\frac{1.3}{e(G)}<f(G)<\frac{1.5}{e(G)}
$$
for any $e(G)\geq7$.
\end{cor}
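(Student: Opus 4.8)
The plan is to deduce this corollary directly from Lemma~\ref{sn+}, exploiting the numerical coincidence that $K^+_{1,n-1}$ has exactly as many edges as vertices. The whole argument reduces to a single substitution, so I expect no genuine obstacle; the only point demanding care is the bookkeeping of the vertex and edge counts, which is precisely the remark the authors make immediately before the statement.

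First I would pin down the edge count of $K^+_{1,n-1}$. The star $K_{1,n-1}$ has $n$ vertices and $n-1$ edges; adjoining one further edge between two of its pendant vertices produces $K^+_{1,n-1}$, which therefore has $n$ vertices and $n$ edges. Hence $e(K^+_{1,n-1}) = n(K^+_{1,n-1}) = n$, the identity flagged in the sentence preceding the corollary.

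Next I would translate the hypothesis $G \cong K^+_{1,e(G)-1}$ into the notation of Lemma~\ref{sn+}. Writing $m := e(G)$, the graph $G \cong K^+_{1,m-1}$ is the star on $m$ vertices with one added edge, so $n(G) = m$ and $e(G) = m$ are consistent; in particular the number of vertices of $G$ equals $e(G)$, and the hypothesis $e(G) \ge 7$ is exactly the hypothesis $n(G) \ge 7$ required by Lemma~\ref{sn+}.

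Finally I would invoke Lemma~\ref{sn+} applied to $G$ with $n = n(G) = e(G)$. The lemma supplies
$$
\frac{1.3}{n(G)} < f(G) < \frac{1.5}{n(G)},
$$
and substituting $n(G) = e(G)$ throughout yields precisely
$$
\frac{1.3}{e(G)} < f(G) < \frac{1.5}{e(G)},
$$
which is the desired conclusion. As anticipated, the only substantive content is the identity $n(K^+_{1,n-1}) = e(K^+_{1,n-1})$; once that is in hand the corollary is immediate, and this is exactly what the authors mean by calling the result ``straightforward.''
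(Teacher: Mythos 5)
Your proposal is correct and matches the paper's own reasoning: the authors derive the corollary from Lemma \ref{sn+} precisely via the identity $e(K^+_{1,n-1})=n(K^+_{1,n-1})$, which is the substitution you carry out. You simply make explicit the edge/vertex bookkeeping the paper compresses into the single sentence preceding the statement.
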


\begin{lemma}[\cite{Fritscher}]\label{Ltree}
Let $G$ be a tree on $n$ vertices.
Then 
$$
f(G) > \frac{2}{n}, 
$$
or equivalently, 
$$
f(G) > \frac{2}{e(G)+1}.
$$
\end{lemma}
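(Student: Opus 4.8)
The plan is to strip the signless Laplacian out of the problem entirely and reduce to a sharp statement about Laplacian eigenvalues of trees. First I would use that a tree $G$ is bipartite, say with parts $X$ and $Y$. Let $S=\mathrm{diag}(\varepsilon_v)$ be the signed diagonal matrix with $\varepsilon_v=+1$ for $v\in X$ and $\varepsilon_v=-1$ for $v\in Y$. Since every edge joins $X$ to $Y$ we have $SA(G)S=-A(G)$, while $SD(G)S=D(G)$, so $SQ(G)S=D(G)-A(G)=L(G)$. Thus $Q(G)$ and $L(G)$ are similar, hence isospectral, and in particular $S_2(G)=q_1+q_2=\mu_1+\mu_2$. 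Because $G$ is a tree, $e(G)=n-1$, so $f(G)=e(G)+3-S_2(G)=n+2-(\mu_1+\mu_2)$, and the two displayed inequalities coincide through $e(G)+1=n$. It therefore suffices to prove the purely Laplacian statement $\mu_1+\mu_2<n+2-\tfrac{2}{n}$.

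Second, I would bound $\mu_1+\mu_2$. The quickest route is to quote the known sharp bound of Fritscher \emph{et al.} on the sum of the two largest Laplacian eigenvalues of a tree, which has the form $n+2-\Theta(1/n)$ and in particular lies strictly below $n+2-\tfrac{2}{n}$, yielding $f(G)>\tfrac{2}{n}$ at once. To pin the constant down explicitly and to be sure the inequality is strict rather than merely weak, I would instead use the extremal characterization of Guan, Zhai and Wu quoted above: among all trees on $n\ge 4$ vertices, $\mu_1+\mu_2$ is maximized uniquely by the balanced double star $T^{\ast}=S^1_{\lceil(n-2)/2\rceil,\lfloor(n-2)/2\rfloor}$. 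Hence it is enough to verify $f(T^{\ast})>\tfrac{2}{n}$ for this one tree.

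Third comes the explicit computation. For a double star the leaves contribute the Laplacian eigenvalue $1$ with multiplicity $n-4$, and the remaining spectrum is $\{0\}$ together with the three roots of the cubic attached to the $4\times 4$ equitable quotient on the cells (leaves of $u$, $u$, $v$, leaves of $v$). That quotient has trace $n+2$, so its nonzero roots $\mu_1>\mu_2>\mu_3$ satisfy $\mu_1+\mu_2+\mu_3=n+2$; since the top two overall Laplacian eigenvalues are exactly $\mu_1,\mu_2$, one obtains the pleasant identity $f(T^{\ast})=\mu_3$, the algebraic connectivity of $T^{\ast}$. For even $n$ the quotient splits under the swap symmetry $u\leftrightarrow v$ into two $2\times 2$ blocks, giving the closed form $f(T^{\ast})=\mu_3=\tfrac{8}{(n+4)+\sqrt{(n+4)^2-32}}$, and then $f(T^{\ast})>\tfrac{2}{n}$ reduces, after clearing radicals, to $(n-2)^2>0$, which holds for all $n>2$.

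The main obstacle is the exact value for odd $n$: when $\lceil(n-2)/2\rceil=\lfloor(n-2)/2\rfloor+1$ the quotient no longer splits, so the cubic does not factor over the radicals and one must bound its smallest root from below rather than solve for it. I would handle this by showing that the cubic is negative at $2/n$ while having the correct signs at $0$ and at $\mu_2$, forcing $\mu_3>2/n$; the few smallest orders should also be checked by hand. If one prefers to avoid the Guan--Zhai--Wu extremal tree altogether, the whole burden shifts to proving the tree bound $\mu_1+\mu_2\le n+2-\Theta(1/n)$ from scratch, and the genuinely hard part then becomes locating $\mu_2$ --- most naturally via the Jacobs--Trevisan diagonalization algorithm on trees, counting eigenvalues above a well-chosen shift and optimizing that shift over all tree shapes to single out the double star as the worst case.
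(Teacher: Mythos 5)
The paper contains no internal proof of this lemma: it is imported wholesale from Fritscher \emph{et al.}\ \cite{Fritscher}, whose theorem bounds $\sum_{i=1}^{k}\mu_i(T)$ for trees for every $k$ (the case $k=2$ gives $\mu_1+\mu_2 < n+2-\frac{2}{n}$, which, combined with the bipartite similarity of $Q$ and $L$ that the paper leaves implicit, is exactly the statement). Your proposal is therefore a genuinely different route, and it is sound. You correctly make explicit the step the paper suppresses ($SQ(G)S=L(G)$ for bipartite $G$, so $S_2(G)=\mu_1+\mu_2$ for trees), and you then replace Fritscher's general argument by the Guan--Zhai--Wu extremal characterization \cite{Guan} quoted in the paper, reducing the whole lemma to one computation on the balanced double star $T^{\ast}$. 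Your quotient analysis checks out: the trace identity does give $f(T^{\ast})=\mu_3$ (using $\mu_2>1>\mu_3$, which you should state explicitly but which is immediate from $\mu_2\ge d_2\ge 2$ and the fact that only the star has algebraic connectivity $1$), and for even $n$ the closed form $\mu_3=\frac{8}{(n+4)+\sqrt{(n+4)^2-32}}$ and the reduction of $f(T^{\ast})>\frac{2}{n}$ to $8(n-2)^2>0$ are correct. The one incomplete step, the odd case, does close exactly as you predict, and in fact your sign test works uniformly without the parity split: for $a=b+1$ the cubic factor of the quotient characteristic polynomial is $g(\lambda)=\lambda^3-(n+2)\lambda^2+\frac{n^2+4n+7}{4}\lambda-n$, and $2n^3\,g\!\left(\frac{2}{n}\right)=-n^4+4n^3-n^2-16n+16<0$ for all $n\ge 3$, which together with $\mu_2>1>\frac{2}{n}$ forces $\mu_3>\frac{2}{n}$. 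Comparing the two routes: the paper's citation inherits Fritscher's result for all $k$ at once, while your argument buys an exact extremal statement --- the minimum of $f$ over trees equals the algebraic connectivity of the balanced double star --- at the cost of leaning on the Guan--Zhai--Wu theorem, which is at least as heavy a black box as the Fritscher bound it replaces; if that dependence is removed, as you note, one is back to proving a Fritscher-type bound from scratch.
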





For any square matrix $M$, we use $P(M,x)$ to denote the characteristic polynomial of $M$. For a graph $G$, we write the characteristic polynomial of its signless Laplacian matrix $P(Q(G),x)$ as $P_Q(G,x)$ for short.

\begin{lemma}\label{ff}
For any positive integer $a \ge 3$, the value $f(K^+_{1,a})$ is monotonically decreasing with respect to $a$.
\end{lemma}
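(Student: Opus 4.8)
The plan is to make the dependence on $a$ fully explicit by computing the two relevant signless Laplacian eigenvalues of $K^+_{1,a}$ in closed form, and then to differentiate (or compare consecutive differences) to establish monotonicity. Recall that $K^+_{1,a}$ is the star $K_{1,a}$ on $a+1$ vertices with one extra edge joining two of the leaves; thus it has $a+1$ vertices and $a+1$ edges. First I would write down $P_Q(K^+_{1,a},x)$ explicitly. The graph has high symmetry: the center has degree $a$, the two endpoints of the added edge have degree $2$, and the remaining $a-2$ leaves have degree $1$ and are mutually interchangeable. Exploiting this automorphism structure (the $a-2$ pendant vertices form an orbit), the characteristic polynomial factors, with the eigenvalue $1$ appearing with multiplicity $a-3$ and a small number of ``genuine'' eigenvalues coming from a quotient matrix of size at most $4$. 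The largest two signless Laplacian eigenvalues $q_1$ and $q_2$ are roots of this low-degree factor, so $S_2(K^+_{1,a}) = q_1 + q_2$ can be read off via Vieta's formulas as a ratio of polynomials in $a$ without solving any radical explicitly.

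Concretely, I would set up the equitable partition of $V(K^+_{1,a})$ into the center, the two degree-$2$ vertices, and the $a-2$ pendants, obtain the corresponding $3\times 3$ (or $4\times 4$) quotient of $Q(K^+_{1,a})$, and extract its characteristic polynomial $g(x)$. Since $e(K^+_{1,a}) = a+1$, we have
\begin{equation}
f(K^+_{1,a}) = (a+1) + 3 - S_2(K^+_{1,a}) = (a+4) - S_2(K^+_{1,a}),
\end{equation}
and the task reduces to showing that $S_2(K^+_{1,a}) - a$ is strictly increasing in $a$. Rather than solving $g(x)=0$ by radicals, I would either (i) treat $a$ as a continuous variable, use implicit differentiation of $g(x,a)=0$ at the two relevant roots to get $dq_1/da$ and $dq_2/da$, and show $dq_1/da + dq_2/da > 1$; or (ii) work discretely, forming $f(K^+_{1,a}) - f(K^+_{1,a+1})$ and showing it is positive. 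Approach (ii) keeps everything rational in $a$ and avoids delicate sign analysis of derivatives of algebraic functions.

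The main obstacle is controlling $q_1$ and $q_2$ tightly enough. The largest eigenvalue $q_1$ grows essentially like $a$ (it tracks the maximum degree), while $q_2$ tends to a constant near $3$; the claimed monotonicity is a statement about the lower-order behavior of this sum, so crude bounds will not suffice and I must use the exact factor $g(x)$. I expect the cleanest route is to express $S_2(K^+_{1,a})$ through the coefficients of $g$ together with the known third and fourth ``genuine'' eigenvalues (one of which should be small and near $0$ or $1$), using $\operatorname{tr}(Q) = \sum_v d_v = 2e(K^+_{1,a}) = 2(a+1)$ to pin down the sum of all four quotient eigenvalues, and then isolating $q_1 + q_2$ as the total minus the two smaller quotient eigenvalues. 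Establishing that those two smaller eigenvalues, as functions of $a$, have total derivative strictly less than $1$ (equivalently, that they grow slower than linearly with the right margin) is the crux; once that separation is secured, the strict decrease of $f(K^+_{1,a})$ in $a$ follows immediately for all $a \ge 3$.
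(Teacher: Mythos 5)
Your setup is the right one and matches the paper's: the same equitable partition (center, the two degree-$2$ vertices, the $a-2$ pendants), the same reduction $f(K^+_{1,a})=(a+4)-S_2(K^+_{1,a})$, and your option (ii) --- comparing consecutive values of $a$ while keeping everything polynomial in $a$ --- is precisely the mechanism the paper uses. But the proposal stops exactly where the lemma's content begins: you name as ``the crux'' the claim that the smaller quotient eigenvalues grow sublinearly with the right margin, and then defer it. Nothing in the proposal establishes that separation, so as written this is a plan, not a proof. Moreover, the crux is cleaner than you anticipate, and you miss the simplification: the quotient is genuinely $3\times 3$ with trace $3+a+1=a+4$, so by your own Vieta bookkeeping $f(K^+_{1,a})$ is \emph{exactly} the smallest root of the quotient characteristic polynomial $g_a(x)=x^3-(a+4)x^2+3(a+1)x-4$. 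Since $g_a(0)=-4<0$ and $g_a(1)=2(a-2)>0$, that root lies in $(0,1)$; and $g_{a+1}(x)-g_a(x)=x(3-x)>0$ on $(0,3)$, so the smallest root strictly decreases with $a$, which is the lemma. This is the paper's argument in mirror image: the paper runs it through the second additive compound $\Delta_2(Q_a^\pi)$ and the shifted polynomial $P_a(x)=x^3+(a+4)x^2+3(a+1)x+4$ (which is $-g_a(-x)$), locating the \emph{largest} root in $(-1,0)$ and using $P_{a+1}(x)-P_a(x)=x(x+3)<0$ there.

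Two concrete slips would also derail your bookkeeping if carried out as stated. First, the multiplicity of the eigenvalue $1$ is $a-2$, not $a-3$ ($K^+_{1,a}$ has $a+1$ vertices and the quotient has $3$ classes); if you subtract $a-3$ from the full trace $2e=2(a+1)$, you get $a+5$ for the sum of the quotient eigenvalues instead of the correct $a+4$, and the identity $f(K^+_{1,a})=q_3(Q_a^\pi)$ fails by $1$. Second, you assert without justification that $q_1$ and $q_2$ of the graph come from the low-degree factor rather than from the $(x-1)^{a-2}$ block; this requires a sign check, e.g.\ $g_a(1)=2(a-2)>0$ and $g_a(3)=-4<0$, showing two quotient roots exceed $1$ (this is exactly the verification the paper performs). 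Both gaps are repairable, and with them filled your route (ii) coincides with the paper's proof; your route (i) (implicit differentiation in a continuous parameter $a$) would also work but demands sign analysis of $\partial g/\partial x$ at an algebraically implicit root, which, as you yourself observe, is more delicate for no gain.
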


\begin{proof}
When $a \ge 3$, we can partition the vertices of $K^+_{1,a}$ into there parts, the first one consists of the two vertices of degree $2$, the second one contains uniquely the vertex of degree $a$, and the third one is formed by the $a-2$ vertices of degree $1$. 
The corresponding quotient matrix is
$$
Q_a^\pi=
\begin{pmatrix}
3&1&0\\
2&a&a-2\\ 
0&1&1
\end{pmatrix}.
$$

By the  equitable partition theory, we can get that the characteristic polynomial of $K^+_{1,a}$ is
$$
P_Q(K^+_{1,a},x)=P(Q_a^\pi,x)(x-1)^{a-2},
$$
where 
$$
P(Q_a^\pi,x) = x^3 - (a+4)x^2 + 3(a+1)x -4.
$$

From 
$$
P(Q_a^\pi,1) = 2 (a - 2) > 0 ~~~~\mbox{and} ~~~~P(Q_a^\pi,3) = -4 < 0,
$$
we know that the first two largest roots of $P(Q_a^\pi,x) = 0$ are both larger than $1$, thus the first two largest roots of $P_Q(K^+_{1,a},x) = 0$ is actually 
the first two largest roots of $P(Q_a^\pi,x) =0$.
As a consequence of Lemma \ref{MF} with $k=2$, $S_2 (K^+_{1,a})$ is equal to the largest eigenvalue of $\Delta_2(Q_a^\pi)$. 

It is not hard to get the second additive compound matrix of $Q_a^\pi$ as
$$
\Delta_2(Q_a^\pi)=
\begin{pmatrix}
a+3&a-2&0\\
1&4&1\\ 
0&2&a+1
\end{pmatrix}.
$$
Set
$$
P_a(x) :=  P(\Delta_2(Q_a^\pi),x+a+4)=x^3+(a+4)x^2+3(a+1)x+4.
$$
It leads to
$$
P_{a+1}(x) -P_a(x) = x( x +3 ).
$$

From $P_a(x) > 0$ for any $x \ge 0$, and $P_a(-1)=-2(a-2) < 0$, 
the largest root of $P_a(x) = 0$ is in the interval $(-1,0)$, for any positive integer $a \ge 3$.
So $P_{a+1}(x) -P_a(x)$ is always negative when $x\in (-1,0)$,
which indicates that the largest root of $P_{a+1}(x)=0$ is larger than the largest root of $P_{a}(x)=0$, equivalently,
$$
S_2(K^+_{1,a+1}) - a -5 > S_2(K^+_{1,a}) - a -4.
$$

Observe that $e(K^+_{1,a})=a+1$ and $e(K^+_{1,a+1})=a+2$, thus
$$
f(K^+_{1,a}) = a + 4 - S_2(K^+_{1,a}) > a +5 - S_2(K^+_{1,a+1}) = f(K^+_{1,a+1}),
$$
as requested.
\end{proof}

\begin{lemma}[\cite{DCve}]\label{interlace}
Let $G$ be a graph with $n$ vertices and let $G^\prime$ be a graph obtained from $G$ by inserting a new edge into $G$. Then the signless Laplacian eigenvalues of $G$ and $G^\prime$ interlace, that is,
$$q_1(G^\prime)\geq q_1(G)\geq\cdots\geq q_n(G^\prime)\geq q_n(G).$$
\end{lemma}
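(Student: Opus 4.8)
The final statement to prove is Lemma~\ref{interlace}, the interlacing of signless Laplacian eigenvalues under the insertion of a single edge. Although it is attributed to reference \cite{DCve}, I sketch a self-contained proof below.

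\medskip

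The plan is to realize the inserted edge as a rank-one perturbation of the signless Laplacian and apply the classical Cauchy/Weyl interlacing theorem for symmetric matrices. Concretely, suppose $G'$ is obtained from $G$ by inserting the new edge $uv$. Recall that for any edge $e=uv$ one has $Q(\{uv\}) = (\mathbf{e}_u+\mathbf{e}_v)(\mathbf{e}_u+\mathbf{e}_v)^{\top}$, where $\mathbf{e}_u,\mathbf{e}_v$ denote the standard basis vectors indexed by the two endpoints; this is because adding the edge $uv$ increases both diagonal degree entries $d_u,d_v$ by $1$ and both off-diagonal entries $Q_{uv},Q_{vu}$ by $1$. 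Hence
$$
Q(G') = Q(G) + (\mathbf{e}_u+\mathbf{e}_v)(\mathbf{e}_u+\mathbf{e}_v)^{\top},
$$
so $Q(G')$ is obtained from $Q(G)$ by adding a positive semidefinite matrix of rank exactly one.

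\medskip

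First I would record that $Q(G)$ and $Q(G')$ are both real symmetric $n \times n$ matrices, so all their eigenvalues are real and the spectral theorem applies; I write their eigenvalues in nonincreasing order as $q_i(G)$ and $q_i(G')$. Next I would invoke Weyl's inequalities for a symmetric rank-one additive perturbation. For symmetric $M$ and a rank-one positive semidefinite update $M' = M + ww^{\top}$, Weyl's theorem gives the two-sided bounds
$$
q_i(M') \ge q_i(M) \ge q_{i+1}(M')
$$
for each $1 \le i \le n-1$, together with $q_1(M') \ge q_1(M)$ and $q_n(M') \ge q_n(M)$. The left inequality $q_i(M') \ge q_i(M)$ holds because adding a positive semidefinite matrix cannot decrease any eigenvalue (monotonicity under the Loewner order, itself a consequence of the Courant--Fischer min-max characterization). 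The right inequality $q_i(M) \ge q_{i+1}(M')$ is the interlacing direction for a rank-one update, obtained from Courant--Fischer by restricting the competing subspaces to the orthogonal complement of $w$, on which $M'$ and $M$ agree. Taking $w = \mathbf{e}_u+\mathbf{e}_v$, $M=Q(G)$, and $M'=Q(G')$ and stringing these inequalities together yields precisely the claimed chain
$$
q_1(G') \ge q_1(G) \ge q_2(G') \ge q_2(G) \ge \cdots \ge q_n(G') \ge q_n(G).
$$

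\medskip

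The only genuine subtlety, and the step I would present most carefully, is the verification that the perturbation really is a rank-one positive semidefinite matrix with the correct rank-one vector; once that algebraic identity is in hand, the interlacing is an immediate application of the standard symmetric eigenvalue perturbation machinery and requires no graph-specific argument. In particular, I would emphasize that the signless Laplacian (unlike a naive adjacency-only update) receives contributions on both the diagonal degree entries and the off-diagonal entries, and that these combine exactly into the outer product $(\mathbf{e}_u+\mathbf{e}_v)(\mathbf{e}_u+\mathbf{e}_v)^{\top}$ rather than into an indefinite matrix; this positive semidefiniteness is what forces all inequalities to point in the stated direction. Since the result is standard and already available in \cite{DCve}, I would keep the exposition brief and cite it, but the rank-one perturbation argument above is the natural route to a direct proof.
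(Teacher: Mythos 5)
Your proof is correct: the identity $Q(G')=Q(G)+(\mathbf{e}_u+\mathbf{e}_v)(\mathbf{e}_u+\mathbf{e}_v)^{\top}$ holds exactly as you verify (the new edge $uv$ raises the diagonal entries $d_u,d_v$ and both off-diagonal entries $Q_{uv},Q_{vu}$ by $1$), and Weyl's inequalities for a rank-one positive semidefinite update, namely $q_i(M+ww^{\top})\ge q_i(M)\ge q_{i+1}(M+ww^{\top})$, then yield precisely the stated interlacing chain. The paper gives no proof of this lemma---it is quoted from \cite{DCve}---and your rank-one perturbation argument is essentially the standard proof of that cited textbook result, so the proposal is both complete and in line with the intended source.
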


\begin{lemma}[\cite{KF}]\label{KF}
Let $A$ and $B$ be two real symmetric matrices of size $n$. Then for any $1\leq k\leq n$,
$$\sum^{k}_{i=1}\lambda_{i}(A+B)\leq\sum^{k}_{i=1}\lambda_{i}(A)+\sum^{k}_{i=1}\lambda_{i}(B).$$
\end{lemma}

Apparently, $\lambda_{1}(\Delta_k(A))=\sum^{k}_{i=1}\lambda_{i}(A)$,
then the above lemma can be directly obtained from the fact that
$\lambda_{1}(\Delta_k(A+B))\leq\lambda_{1}(\Delta_k(A))+\lambda_{1}(\Delta_k(B)).$ An immediate result of Lemma \ref{KF} is the following corollary which will be used frequently.

\begin{cor}\label{union}
Let $G$ be a graph of order $n$ such that there exist edge disjoint subgraphs of $G$, say $G_1,\cdots,G_r$, with $E(G)=\bigcup^r_{i=1}E(G_i)$. Then $S_k(G)\leq\sum^{r}_{i=1}S_k(G_i)$ for any integer $k$ with $1\leq k\leq n$.
\end{cor}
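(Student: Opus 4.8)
The plan is to derive the corollary directly from Lemma \ref{KF} by decomposing the signless Laplacian matrix $Q(G)$ into a sum of signless Laplacian matrices associated with the edge-disjoint pieces $G_1,\dots,G_r$, and then invoking the subadditivity of the sum of the first $k$ eigenvalues. The key observation is that the signless Laplacian is \emph{edge-additive}: for a single edge $uv$, the contribution to $Q$ is the matrix with entries $1$ in positions $(u,u)$, $(v,v)$, $(u,v)$, $(v,u)$ and $0$ elsewhere. Summing such elementary contributions over the edge set recovers $Q(G)=\sum_{uv\in E(G)} Q_{uv}$, so any partition of $E(G)$ into edge-disjoint blocks induces a corresponding splitting of $Q(G)$.

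First I would set up the matrices carefully. For each $i$, regard $G_i$ as a spanning subgraph of $G$ on the same vertex set $V(G)$ (adding isolated vertices as needed), and let $Q(G_i)$ denote its signless Laplacian matrix, which is then an $n\times n$ real symmetric matrix. Because the edge sets $E(G_1),\dots,E(G_r)$ partition $E(G)$, edge-additivity gives
$$
Q(G)=\sum_{i=1}^{r} Q(G_i).
$$
Each $Q(G_i)$ is real symmetric of size $n$, so Lemma \ref{KF} applies to the $r$-fold sum (applying the two-matrix statement inductively, or noting that Lemma \ref{KF} extends to any finite sum of symmetric matrices by a routine induction on $r$).

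Next I would apply Lemma \ref{KF}. Writing $\lambda_i(\cdot)$ for the $i$-th largest eigenvalue, subadditivity of the top-$k$ eigenvalue sum yields
$$
\sum_{i=1}^{k}\lambda_i\!\left(\sum_{j=1}^{r} Q(G_j)\right)\leq \sum_{j=1}^{r}\sum_{i=1}^{k}\lambda_i\bigl(Q(G_j)\bigr).
$$
Since the eigenvalues of $Q(G)$ are exactly the signless Laplacian eigenvalues $q_1(G)\ge\cdots\ge q_n(G)$, the left-hand side is $\sum_{i=1}^k q_i(G)=S_k(G)$, and similarly each inner sum on the right is $S_k(G_j)$. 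This gives $S_k(G)\leq\sum_{j=1}^r S_k(G_j)$, as claimed. A small point worth a sentence is that padding each $G_i$ with isolated vertices does not change $S_k(G_i)$, since isolated vertices only introduce zero eigenvalues of $Q$, which never fall among the first $k$ largest for the range $1\le k\le n$; hence $S_k$ computed on the spanning version agrees with the intended value on $G_i$.

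I do not anticipate a genuine obstacle here, as the result is an immediate packaging of Lemma \ref{KF}. The only care needed is bookkeeping: ensuring all $Q(G_i)$ are written as $n\times n$ matrices on a common vertex set so that the matrix identity $Q(G)=\sum_i Q(G_i)$ and the hypotheses of Lemma \ref{KF} both hold literally, and confirming that this common-vertex-set convention leaves each $S_k(G_i)$ unchanged. The inductive extension of Lemma \ref{KF} from two matrices to $r$ matrices is the only step requiring a brief justification, and it follows by grouping $\sum_{j=1}^r Q(G_j)=Q(G_1)+\bigl(\sum_{j=2}^r Q(G_j)\bigr)$ and iterating.
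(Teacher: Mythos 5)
Your proposal is correct and is precisely the argument the paper intends: the paper states the corollary as an immediate consequence of Lemma \ref{KF}, via the same edge-additive decomposition $Q(G)=\sum_{i=1}^{r}Q(G_i)$ on a common vertex set and induction on $r$, which you carry out explicitly. One micro-quibble: your claim that zero eigenvalues from padded isolated vertices ``never fall among the first $k$ largest'' is not literally true when $k$ exceeds the number of nonzero eigenvalues of $Q(G_i)$, but since $Q$ is positive semidefinite those zeros contribute nothing to $S_k(G_i)$, so your conclusion stands unchanged.
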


\begin{lemma}\label{ksubgraph}
If $G$ is a graph with a nonempty subgraph $H$ for which $S_k(H)\leq e(H)$, then $S_k(G)<e(G)$.
\end{lemma}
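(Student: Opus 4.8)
The natural first move is to feed the edge-disjoint splitting $E(G)=E(H)\cup\bigl(E(G)\setminus E(H)\bigr)$ into Corollary \ref{union}. Writing $R=G-E(H)$ for the spanning subgraph carrying the edges of $G$ outside $H$, we have $Q(G)=Q(H)+Q(R)$, so
$$
S_k(G)\le S_k(H)+S_k(R)\le e(H)+S_k(R),
$$
and since $e(G)=e(H)+e(R)$ the target $S_k(G)<e(G)$ reduces to controlling the surplus $S_k(R)-e(R)$ produced by the edges lying outside $H$. It is convenient to reframe everything through the identity $e(F)-S_k(F)=\tfrac12\bigl(\sum_{i>k}q_i(F)-\sum_{i\le k}q_i(F)\bigr)$, which turns the claim into the statement that, for $G$, the tail of the signless Laplacian spectrum strictly outweighs its top-$k$ head; the hypothesis on $H$ says only that its tail weakly outweighs its head. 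So the whole problem is to track how attaching the edges of $R$ redistributes spectral mass between head and tail.

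Before committing to this I would record why the crudest execution fails, since that pinpoints the real difficulty. If one decomposes $R$ into its individual edges and applies Corollary \ref{union} edge-by-edge, one gets only $S_k(R)\le 2e(R)$, because a single edge $K_2$ has $S_k(K_2)=2$ while $e(K_2)=1$: this costs $+2$ per extra edge and merely yields $S_k(G)\le e(G)+e(R)$, the wrong direction. The heart of the argument must therefore be a genuine \emph{marginal} estimate: adding one edge $f$ to a graph $F$ with $S_k(F)\le e(F)$ gives $F'=F+f$ with $S_k(F')<e(F')=e(F)+1$, i.e.\ the top-$k$ sum rises by strictly less than $1$. Granting this one-edge step, induction on $e(R)=|E(G)\setminus E(H)|$ (base case $H$, where $S_k(H)\le e(H)$) propagates the inequality to $G$ and makes it strict as soon as one edge is added; the boundary case $G=H$ must be treated separately, where strictness has to come from $S_k(H)<e(H)$.

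To attack the one-edge step I would use the rank-one structure of the update together with Lemma \ref{interlace}. Writing $Q(F')=Q(F)+ww^{\top}$ with $w=e_u+e_v$ and $\|w\|^{2}=2$, interlacing gives $q_i(F')\ge q_i(F)$ for all $i$ while $\operatorname{tr}Q(F')-\operatorname{tr}Q(F)=2$, so exactly a mass of $2$ is injected and split between the head block and the tail block. Using the variational identity $S_k(M)=\max_{P}\operatorname{tr}(PM)$ over rank-$k$ orthogonal projections $P$, the head increase $S_k(F')-S_k(F)$ lies between the mass $\|P_{F}w\|^{2}$ captured by the top-$k$ eigenspace $P_F$ of $F$ and the full $\|w\|^{2}=2$; the complementary tail block absorbs the remainder. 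In these terms $S_k(F')<e(F')$ is precisely the requirement that the head captures strictly less than half, that is $\|P_{F'}w\|^{2}<1$.

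The main obstacle is exactly this captured-mass bound. The crude estimate only gives ``at most $2$,'' and the bound below $1$ cannot hold for an arbitrary saturated $F$: if the top-$k$ eigenspace is degenerate and concentrated on the endpoints $u,v$ — as happens, for instance, when $F$ is a union of many disjoint edges — then attaching $f$ can capture mass close to $\sqrt2$ and push $S_k$ above $e$. Thus the saturation of $F$ must be combined with genuine structural spreading of $G$ to force the top-$k$ eigenvectors to place small weight on $u$ and $v$, so that $\|P_{F'}w\|^{2}<1$. Quantifying this captured mass in terms of the saturation hypothesis, and verifying that it is the ambient graph $G$ (and not merely some superset of $H$) that supplies the needed spreading, is the step I expect to demand the most care and to be where any additional connectivity or non-degeneracy assumptions must enter.
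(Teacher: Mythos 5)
Your proposal does not arrive at a proof, so measured against the paper it has a genuine gap. You open exactly as the paper does---split $E(G)$ disjointly into $E(H)$ and $E(G)\setminus E(H)$ and apply Corollary~\ref{union} to get $S_k(G)\le S_k(H)+S_k(G-E(H))\le e(H)+S_k(G-E(H))$---but you then reduce the lemma to a marginal one-edge claim (adding an edge to a graph $F$ with $S_k(F)\le e(F)$ keeps the top-$k$ sum below the new edge count), you show yourself that this claim fails for unions of disjoint edges, and you stop, deferring to unspecified connectivity or non-degeneracy hypotheses. The paper avoids the edge-by-edge induction altogether: it takes a counterexample $G$ with $e(G)$ minimum, derives $e(G)\le S_k(G)\le S_k(H)+S_k(G-H)\le e(H)+S_k(G-H)$, hence $S_k(G-H)\ge e(G-H)$, and declares this to contradict minimality---i.e., it transfers the situation in one step to the strictly smaller graph $G-E(H)$ rather than one edge at a time. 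So the concrete missing piece in your write-up is any mechanism replacing the refuted one-edge estimate; as submitted, nothing is proved.

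That said, the obstruction you found is sharper than you credit it. Your one-edge step with saturated $F$ is itself an instance of Lemma~\ref{ksubgraph} (take $H=F$ and $G=F+f$), so your disjoint-edges example, made explicit, refutes the lemma as printed rather than merely your method: for $k=2$ let $H=4K_2$, so $S_2(H)=4=e(H)$, and let $G=P_4\cup 2K_2\supseteq H$ (a matching of size $4$: two independent edges of $P_4$ plus the two $K_2$ components). Since $P_4$ is bipartite, its signless Laplacian spectrum equals its Laplacian spectrum $\{2+\sqrt{2},\,2,\,2-\sqrt{2},\,0\}$, so $S_2(G)=(2+\sqrt{2})+2=4+\sqrt{2}>5=e(G)$. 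The same example exposes where the paper's own minimal-counterexample step leaks: $S_k(G-H)\ge e(G-H)$ does not exhibit $G-E(H)$ as a smaller counterexample, because nothing guarantees that $G-E(H)$ contains a nonempty subgraph with $S_k\le e$ (here $G-E(H)$ is a single edge, with $S_2=2>1$); moreover the weak hypothesis against the strict conclusion leaves an equality loophole (even $G=H=4K_2$ violates the statement). So your instinct that extra hypotheses are needed (e.g., strict inequality $S_k(H)<e(H)$, or structural restrictions on $H$ and $G$) is well founded---but the honest verdict on the proposal is that it contains a genuine gap, ending precisely at the difficulty that, as your own example shows, the paper's three-line argument does not close either.
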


\begin{proof}
Assume that $G$ is a counterexample with a minimum possible number of edges.
By Corollary \ref{union}, we have $e(G)\leq S_k(G)\leq S_k(H)+S_k(G-H)$.
This implies that $S_k(G-H)\geq e(G-H)$,
since $e(G)\geq e(H)+e(G-H)$ and $S_k(H)\leq e(H)$,
which contradicts the minimality of $e(G)$.
\end{proof}

\begin{cor}\label{subgraph}
If $G$ is a graph with a nonempty subgraph $H$ for which $S_2(H)\leq e(H)$, then $S_2(G)<e(G)$ and $f(G)>3$.
\end{cor}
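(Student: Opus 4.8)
The plan is to recognize the statement as an immediate specialization of the preceding Lemma \ref{ksubgraph} together with a single algebraic rearrangement of the definition of $f$. The first assertion, $S_2(G) < e(G)$, is nothing other than the case $k = 2$ of Lemma \ref{ksubgraph}: the hypothesis that $G$ has a nonempty subgraph $H$ with $S_2(H) \le e(H)$ is exactly the hypothesis $S_k(H) \le e(H)$ with $k = 2$. So the first step is simply to invoke Lemma \ref{ksubgraph} with $k = 2$ and read off $S_2(G) < e(G)$ with no further argument required.

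For the second assertion I would substitute this strict inequality into the defining relation $f(G) = e(G) + 3 - S_2(G)$. Since $S_2(G) < e(G)$ gives $-S_2(G) > -e(G)$, one obtains $f(G) = e(G) + 3 - S_2(G) > e(G) + 3 - e(G) = 3$, which is the claimed bound $f(G) > 3$. That completes the entire content of the corollary.

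The main obstacle is genuinely absent at the level of the corollary itself: all the real work has already been carried out upstream. In particular, Lemma \ref{ksubgraph} is established for arbitrary $k$ via a minimum-counterexample argument that leans on the subadditivity $S_k(G) \le \sum_i S_k(G_i)$ over an edge-disjoint decomposition, supplied by Corollary \ref{union}, which in turn descends from the Ky~Fan--type inequality of Lemma \ref{KF} read through the additive compound matrix $\Delta_k$. Consequently the only thing one might pause to check is that the specialization $k = 2$ is admissible, but since Lemma \ref{ksubgraph} already covers every $k$ with $1 \le k \le n$, nothing beyond the two steps above is needed.

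I should flag one stylistic point for the write-up: because the corollary is essentially a restatement plus one line of arithmetic, I would keep the proof to two or three sentences rather than repeating the minimum-counterexample machinery, and I would make the logical dependence explicit by citing Lemma \ref{ksubgraph} for the inequality $S_2(G) < e(G)$ and then deriving $f(G) > 3$ directly from the definition of $f(G)$.
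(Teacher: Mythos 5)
Your proposal is correct and matches the paper's intent exactly: the paper states this as a corollary of Lemma \ref{ksubgraph} with no written proof, precisely because the first claim is the case $k=2$ of that lemma and the second follows from substituting $S_2(G)<e(G)$ into $f(G)=e(G)+3-S_2(G)$. Your two-line argument is the same (implicit) argument the paper relies on.
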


\section{Proof of Theorem \ref{them4}}

Now we are ready to present the proof of Theorem \ref{them4}. Let $G$ be a graph with $e(G)$ edges. We partition the proof into two parts.

\begin{case}
The first two largest signless Laplacian eigenvalues come from two different components of $G$.     
\end{case}

Assume that $S_2(G)=q_1(G_1)+q_1(G_2)$, where $G_1$ and $G_2$ are two components of $G$. It is well-known that the largest signless Laplacian eigenvalue of any graph is no more than the number of edges plus one, e.g., see \cite{Ashraf}. So we have 
$$
S_2(G)=q_1(G_1)+q_1(G_2)\leq e(G_1)+e(G_2)+2 \le e(G)+2.
$$ 
Then we obtain $f(G)\geq 1$, as well as
$f(G)>f(K^+_{1,e(G)-1})$, since $f(K^+_{1,e(G)-1}) < \frac{1.5}{e(G)}$ from Lemma \ref{esn+}.

\begin{case}
The first two largest signless Laplacian eigenvalues are exactly the first two largest signless Laplacian eigenvalues of some components of $G$.     
\end{case}


\begin{subcase}
$G$ is a tree.
\end{subcase}

First suppose that $e(G)=4$. All the trees with $e(G)=4$ are depicted in Fig. \ref{Tree graph}.
By direct calculation, we have $f(T_i)>f(K^+_{1,3}),\ i=1,2,3$.

\begin{figure}[htbp!]
\setlength{\unitlength}{0.8pt}
\begin{center}
\begin{picture}(377.0,60.2)
\put(27.6,31.2){\circle*{6}}
\put(0.0,31.2){\circle*{6}}
\qbezier(27.6,31.2)(13.8,31.2)(0.0,31.2)
\put(58.0,31.2){\circle*{6}}
\qbezier(27.6,31.2)(42.8,31.2)(58.0,31.2)
\put(86.3,31.2){\circle*{6}}
\qbezier(58.0,31.2)(72.1,31.2)(86.3,31.2)
\put(116.7,31.2){\circle*{6}}
\qbezier(86.3,31.2)(101.5,31.2)(116.7,31.2)
\put(174.0,31.2){\circle*{6}}
\put(232.0,31.2){\circle*{6}}
\qbezier(174.0,31.2)(203.0,31.2)(232.0,31.2)
\put(203.0,31.2){\circle*{6}}
\put(261.0,60.2){\circle*{6}}
\qbezier(232.0,31.2)(246.5,45.7)(261.0,60.2)
\put(261.0,31.2){\circle*{6}}
\qbezier(232.0,31.2)(246.5,31.2)(261.0,31.2)
\put(319.0,31.9){\circle*{6}}
\put(377.0,31.9){\circle*{6}}
\qbezier(319.0,31.9)(348.0,31.9)(377.0,31.9)
\put(348.0,31.9){\circle*{6}}
\put(376.3,60.2){\circle*{6}}
\qbezier(348.0,31.9)(362.1,46.0)(376.3,60.2)
\put(376.3,3.6){\circle*{6}}
\qbezier(348.0,31.9)(362.1,17.8)(376.3,3.6)
\put(49.3,0.0){\makebox(0,0)[tl]{$T1$}}
\put(209.5,0.7){\makebox(0,0)[tl]{$T2$}}
\put(338.6,0.7){\makebox(0,0)[tl]{$T3$}}
\end{picture}
\end{center}
\caption{All the trees with $e(G)=4$.}
\label{Tree graph}
\end{figure}
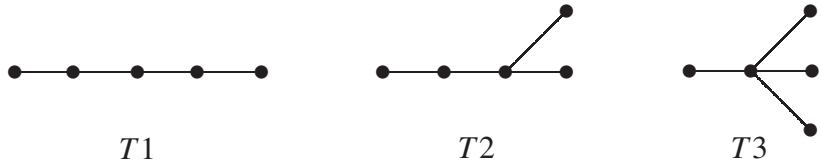


Next suppose that $e(G)=5,6$. On one hand, by direct calculation, we have $f(K^+_{1,4})<\frac{2}{6}$ and $f(K^+_{1,5})<\frac{2}{7}$. On the other hand, from 
Lemma \ref{Ltree}, we have $f(G)> \frac{2}{e(G)+1}$,
then $f(G) > f(K^+_{1,e(G)-1})$ follows.

When $e(G)\geq7$, from Lemmas \ref{Ltree} and \ref{esn+}, we have 
$$
f(G)> \frac{2}{e(G)+1} > \frac{1.5}{e(G)} > f(K^+_{1,e(G)-1}).
$$

\begin{subcase}
$G$ is connected but not a tree.
\end{subcase}

Assume that the order of $G$ is  $n$. Since $G$ is a connected graph but not a tree, we would have $e(G)\geq n$. If $G$ is not isomorphic to $K^+_{1,e(G)-1}$, then we have $f(G) > f(K^+_{1,n-1})$ by Theorem \ref{them3}, and further by Lemma \ref{ff}, we get $f(K^+_{1,n-1}) \geq f(K^+_{1,e(G)-1})$ from $e(G)\geq n$. Combining the two inequalities, it results in $f(G) > f(K^+_{1,e(G)-1})$.

\begin{subcase}
$G$ is not connected.
\end{subcase}

Assume that $H$ is a component of $G$ with $S_2 (G) = S_2 (H)$. We would have
$$
f(G) = e(G) + 3 - S_2 (G) \ge e(H) + 3 - S_2 (H) = f(H). 
$$
As in the above two subcases about connected graphs, we get $f(H) > f(K^+_{1,e(H)-1})$. And from Lemma \ref{ff}, $f(K^+_{1,e(H)-1}) \ge f(K^+_{1,e(G)-1})$
since $e(G) \ge e(H)$. So 
$f(G)>f(K^+_{1,e(G)-1})$ follows.

In conclusion, when $G$ is not isomorphic to $K^+_{1,e(G)-1}$, we always have 
$$
f(G)>f(K^+_{1,e(G)-1}).
$$
The proof of Theorem \ref{them4} is completed.


\newpage

\end{document}